\newtheorem{theorem}{Theorem}[section]
\newtheorem{lemma}[theorem]{Lemma}
\newtheorem{proposition}[theorem]{Proposition}
\newtheorem{corollary}[theorem]{Corollary}
\theoremstyle{definition} 
\newtheorem{definition}[theorem]{Definition}
\newtheorem{example}[theorem]{Example}
\newtheorem{remark}[theorem]{Remark}
\newcommand{\ra}{\longrightarrow}
\newcommand{\mfm}{\mathfrak m}
\newcommand{\mfc}{\mathfrak c}
\newcommand{\mce}{\mathcal{E}}
\newcommand{\mcf}{\mathcal{F}}
\newcommand{\mcl}{\mathcal{L}}
\newcommand{\mcy}{\mathcal{Y}}
\newcommand{\mbf}{\mathbb{F}}
\newcommand{\mbg}{\mathbb G}
\newcommand{\mbk}{\mathbb K}
\newcommand{\mbp}{\mathbb{P}}
\newcommand{\ints}{\mathbb{Z}}
\newcommand{\rats}{\mathbb{Q}}
\newcommand{\la}{\lambda}
\newcommand{\veps}{\varepsilon}
\newcommand{\vp}{\varphi}
\newcommand{\stl}{\left\{}
\newcommand{\str}{\right\}}
\newcommand{\ds}{\oplus}
\newcommand{\bds}{\bigoplus}
\newcommand{\im}{\text{im}}
\newcommand{\hm}{\text{Hom}}
\newcommand{\ckr}{\text{coker}}
\newcommand{\bu}{\bullet}
\newcommand{\floor}{\lfloor \frac{c}{2}\rfloor}
\newcommand{\soc}{\text{Soc}}
\newcommand{\E}{\mathcal{E}}
\newcommand{\Op}{\mathcal{O}}
\newcommand{\ol}{\overline}
\newcommand{\F}{\mathcal{F}}
\newcommand{\e}{\mathcal{E}}
\newcommand{\ml}{\mathcal L}
\journal{journal}
\begin{document}

\begin{frontmatter}



\title{Symmetry, Unimodality, and Lefschetz Properties for Graded Modules}


\affiliation{organization={Two Six Technologies}
            }
\author{Zachary Flores}
\ead{zachary.flores@twosixtech.com}

\begin{abstract}
We investigate the Weak Lefschetz Properties for modules whose minimal free resolutions are given by generalized Kosuzl complexes in dimension three through a careful study of their Betti numbers and the symmetry and unimodality of their Hilbert functions.  We also study the non-Lefschetz locus for finite length modules in arbitrary dimension, and are able to generalize several previous results on the non-Lefschetz locus in this setting.  Along the way, we find several connections with a Gorenstein analogue for finite length modules and Artin level modules that are both interesting and useful throughout this paper.  
\end{abstract}

\end{frontmatter}



\section{Introduction}\label{sec: intro}
We let $\mbk$ be an algebraically closed field, and $S$ the polynomial ring $\mbk[x_1,\ldots, x_r]$ with standard grading and homogeneous maximal ideal $\mfm = (x_1,\ldots, x_r)$.  All $S$-modules considered are finitely generated, in particular, if a finitely generated $S$-module is Artinian, then it has finite length.  We begin with the following definition.  

\begin{definition}\label{def: wlp} 
If $N$ is a graded Artinian $S$-module, we say that \textbf{$N$ has the Weak Lefschetz Property} (\textbf{WLP)} if there is a general linear form $\ell \in S_1$ such that the $\mbk$-linear map $\times \ell:  N_j\ra N_{j+1}$ has maximal rank for all $j$.  
\end{definition} 

The WLP has been studied extensively in the case that $N$ is cyclic, see \cite{MN} for an excellent overview.  Despite the fact that it is not difficult to define the WLP for graded Artinian modules over $S$, before a preprint to this article existed, the WLP had rarely been studied for Artinian modules over $S$.  Work in this direction included \cite{FD}, where the authors study the WLP for an Artinian  graded module over $S$ for $r = 2$, and give an algorithm to test whether or not a graded Artinian module with fixed Hilbert function has the WLP.  
 
The motivation of our work is to generalize the main result of \cite{CI} that shows that if $\mbk$ has characteristic zero, $r = 3$, and $I$ is a codimension $3$ complete intersection, then $N = S/I$ has the WLP.  We aim to generalize the result in the following manner:  if $R = \mbk[x,y, z]$ and $M$ is the finite length cokernel of an $R$-linear map $\vp:  \bds_{j=1}^{n+2} R(-b_j) \ra \bds_{i=1}^n R(-a_i)$, then $M$ has the WLP.  If $n = 1$, then $M$ quotient of $R$ by a complete intersection, and we recover (\cite{CI}, Theorem 2.3).  We were successful in this direction (see Theorem \ref{thm: mainWLPthm}) for a large class of $\vp$, but there are still restrictions on the $a_i$ and $b_j$.  These restrictions were removed in (\cite{ffp}, Theorem 3.7), but we see our results having a distinct algebraic flavor.  Moreover, we are also able to calculate all Betti numbers of $M$ in Section \ref{sec: minFreeRes}, as well provide proofs of other algebraic results that \cite{ffp} does not touch that we discuss below.  

Our results in this direction were inspired by the proof in \cite{CI} that the quotient of a complete intersection has the WLP.  There subtle but strong use is made of the fact that the Hilbert function of a complete intersection is both symmetric and unimodal, and it is unclear when our Artinian module in question may have symmetric or unimodal Hilbert function.  

The symmetry of the Hilbert function of a complete intersection comes from the fact that a complete intersection is Gorenstein, and Gorenstein algebras are well-known to have symmetric Hilbert functions.  There is not a widely-known analogue for the Gorenstein condition for modules of finite length, however, there is a proposed analogue defined in \cite{MK} (see Definition \ref{def: symgor}) that suits our needs perfectly.  Using Definition \ref{def: symgor} and a main result of \cite{MK} (Theorem \ref{thm: MKthm}) we are able to determine when $M$ has symmetric Hilbert function in Proposition \ref{prop: symhilb}.  Moreover, using this, we are able to determine when $M$ has unimodal Hilbert function in Proposition \ref{prop: unimodal}.  While the use of such results was to determine when $M$ has the WLP, they hold independently of $M$ having the WLP, unlike (\cite{ffp}, Corollary 3.9).  
 
Inspired by \cite{nll}, we define and discuss the \textbf{non-Lefschetz locus} for an Artinian graded $S$-module $N$.  Here, given an Artinian $S$-module $N = \bds_{j\in\ints} N_j$, the $S$-module structure of $N$ is determined by a sequence of $\mbk$-linear maps $\phi_j: S_1\ra \hm_\mbk(N_j, N_{j+1})$.  In particular, given a linear form $\ell = a_1x_1 + \cdots + a_rx_r$, $\phi_j(\ell)$ is a matrix of linear forms in $a_1,\ldots, a_r$.  Regarding $a_1,\ldots, a_r$ as variables, we look at the scheme defined by the vanishing of the maximal minors of this matrix, and this is our object of study.  Our work in this direction discusses some issues that are raised when attempting to generalize results of \cite{nll}, but make use of some connections with results on \textbf{Artin level modules} from \cite{alm}, that we also find are of independent interest.  We note that much of our work on the non-Lefschetz locus was generalized by \cite{EM}, but \cite{EM} cited an early preprint of this article heavily, and some of their results rely strongly on the work in this paper.   

The current paper is organized as follows.  In Section \ref{sec: minFreeRes}, we compute the minimal free resolution of a graded $R$-module, for $R = \mbk[x,y,z]$, of the cokernel $M$ of a map $\vp:  \bds_{j=1}^{n+2} R(-b_j)$ to $\bds_{i=1}^n R(-a_i)$ when it has finite length.  This is essential for Section \ref{sec: UniAndSym}, where we discuss symmetry and unimodality properties of $M$, most notably using an analogue of the Gorenstien condition for Artinian modules defined in \cite{MK}.  In Section \ref{sec: LefProp}, we discuss when the $R$-module $M$ has the WLP, recover (\cite{CI}, Theorem 2.3), and give an example a family of non-cyclic $R$-modules that have the WLP using circulant matrices.  In Section \ref{sec: nonLefLocus}, we discuss the non-Lefschtez locus for a graded $S$-module $N$ and give some generalizations from work in \cite{nll}.  Lastly, we discuss what conditions we can place on $N$ so that is the non-Lefschetz locus is given by at most two degrees, and, in some cases, a single degree.  

\section{The Minimal Free Resolution of $M$}\label{sec: minFreeRes}

Our setup for this section is as follows:  $R$ is the polynomial ring $\mbk[x, y, z]$, where $\mbk$ is algebraically closed (we will restrict the characteristic when necessary), $n > 0$ is a positive integer, 

\begin{equation}\label{eq: defM}
\vp:  \bds_{j=1}^{n+2} R(-b_j)\ra \bds_{i=1}^n R(-a_i)
\end{equation}

is a degree zero graded homomorphism with cokernel $M$ such that $b_1\leq b_2\leq\cdots\leq b_{n+2}$ and $a_1\leq a_2\leq \cdots \leq  a_n$, the map $\vp = (\vp_{i,j})$ is such that either $\vp_{i,j} = 0$ or $\vp_{ij}\in R_{e_{ij}}$ with $e_{ij} > 0$ (hence $\deg(\vp_{i} = b_j- a_i)$, and if $I$ denotes the ideal generated by the $n\times n$ minors of $\vp$, we assume that that $I$ has codimension $3$, so that $M$ is Artinian, hence of finite length.  

Since $I$ has codimension $3$, by (\cite{E1}, Theorem A.210), the Buchsbaum-Rim complex provides the minimal free resolution of $M$.  That is, there is an exact sequence $\mbf_\bu$:  

\begin{equation}\label{eq: minFreeRes}
0\ra \bds_{i=1}^n R(-d_i) \stackrel{\delta}{\ra}\bds_{j=1}^{n+2} R(-c_j)\stackrel{\varepsilon}{\ra} \bds_{j=1}^{n+2} R(-b_j)\stackrel{\vp}{\ra} \bds_{i=1}^n R(-a_i),
\end{equation}

where the entries of all maps live in $\mfm$.  In this section, we want to calculate all graded Betti numbers of $M$, which is the cokernel of $\vp$; that is, we determine the values of the $c_j$ and $d_i$.  To do so, we first need information about the maps $\veps$ and $\delta$.  Before we proceed, we note the following lemma that will be used frequently in the sequel.    

\begin{lemma}\label{lem: codim3}
If $\vp:  \bds_{j=1}^{n+2} R(-b_j) \ra \bds_{i=1}^n R(-a_i)$ is as in \eqref{eq: defM}, then $b_i > a_i$ for $i = 1,\ldots, n$.   
\end{lemma} 

\begin{proof} 
Suppose not.  Then there is an $i$ such that $b_i\leq a_i$.  We recall that $b_1\leq \cdots \leq b_{n+2}$ and $a_1\leq \cdots\leq a_n$, so this gives 

$$b_1\leq\cdots\leq b_{i-1}\leq b_i\leq a_i\leq a_{i+1}\leq\cdots\leq a_n.$$

In particular, $\vp$ contains a zero submatrix of size $(n-i+1)\times i$.  Let $t(\vp)$ denote the half-perimeter of this zero submatrix, so that $t(\vp) = n+1$.  Then (\cite{sparse}, Th\'{e}or\`{e}me 1.6.2) says that the codimension of $I$ is at most $n +3-t(\vp)$.  In particular, $I$ has codimension at most $2$, contrary to our assumption.   
\end{proof} 

\subsection{The map $\veps$}
For ease of notation, set $\mbf_1 = \bds_{j=1}^{n+2} R(-b_j)$ and $\mbf_2= \bds_{j=1}^{n+2} R(-c_j)$.  Let $f_{1,1},\ldots, f_{1, n+2}$ be a basis for $\mbf_1$ and $f_{2,1},\ldots, f_{2,n+2}$ be a basis for $\mbf_2$.  Then by (\cite{E1}, Section A2.6.1) $\veps$ is the map such that 

$$\varepsilon(f_{2,j}) = \sum_{K_{r,j}\subset H_j}\text{sgn}(K_{r,j} \subset H_j) \det(\vp_{K_{r,j}}) f_{1,r}$$

Where for $j=1,\ldots, n+2$, $H_j = \stl 1,\ldots, n+2\str\backslash \stl j\str$; for $r\in H_j$, $K_{r,j} = H_j\backslash \stl r\str$; $\vp_{K_{r,j}}$ is the the $n\times n$ minor of $\vp$ indexed by the elements of $K_{r,j}$; and $\text{sgn}(K_{r,j}\subset H_j)$ is the sign of the permutation of $H_j$ that puts the elements of $K_{r,j}$ into the first $n$ positions of $H_j$.  Thus the $j$th column of a matrix $\veps$ is given by 

$$\left[\begin{matrix}
\text{sgn}(K_{1j}\subset H_j)\det(\vp_{K_{1j}})\\
\vdots\\
\text{sgn}(K_{j-1, j}\subset H_j) \det(\vp_{K_{j-1, j}})\\
0\\
\text{sgn}(K_{j+1, j} \subset H_j) \det(\vp_{K_{j+1, j}})\\
\vdots\\
\text{sgn}(K_{n+2,j}\subset H_j)\det(\vp_{K_{n+2,j}})\\
\end{matrix}\right]$$

Noting the $0$ occurs in the $jth$ row.  When $1\leq r < j$, it is not hard to see that $\text{sgn}(K_{r,j}\subset H_j) = (-1)^{n-r+1}$.  Now for $ j < r\leq n+2$, it is also easy to see we have $\text{sgn}(K_{r,j}\subset I_j) = (-1)^{n-r+2} = (-1)^{n-r}$.  If $\Phi_{r,j} := \det(\vp_{K_{r,j}})$, then the $j$th column of the matrix of $\veps$ is given by 

\begin{equation}\label{eq: vepsMat}
\left[\begin{matrix}
(-1)^n\Phi_{1,j}\\
\vdots\\
(-1)^{n+2-j}\Phi_{j-1, j}\\
0\\
(-1)^{n +1 -j} \Phi_{j+1, j}\\
\vdots\\
\Phi_{n+2,j}\\
\end{matrix}\right]
\end{equation}

\subsection{The map $\delta$}
For ease of notation, set $\mbf_3 = \bds_{i=1}^n R(-d_i)$ and let $f_{3,1},\ldots, f_{3,n}$ be a basis for $\mbf_3$.  By (\cite{E1}, Section A.2.6.1) the map $\delta:  \mbf_3\ra \mbf_2$ is such that 

$$f_{3,i} \mapsto \sum_{j=1}^{n+2} (-1)^{j+1}\vp_{i,j}f_{2,j}.$$ 

In particular, the $i$th column of the matrix for $\delta$ is given by 

$$\left[\begin{matrix}
\vp_{i,1}\\
-\vp_{i,2}\\
\vdots\\
(-1)^{j+1}\vp_{i,j}\\
\vdots\\
(-1)^{n+2}\vp_{i,n+1}\\
(-1)^{n+3}\vp_{i, n+2}\\
\end{matrix}\right].$$

Thus a matrix for $\delta$ is given by 

$$\left[\begin{matrix}
\vp_{1,1} & \vp_{2,1} & \cdots & \vp_{n,1} \\
-\vp_{1,2} & -\vp_{2,2} & \cdots & -\vp_{n,2} \\ 
\vdots & \vdots & \cdots & \vdots \\
(-1)^{n+2}\vp_{1, n+1} & (-1)^{n+2}\vp_{2, n+1} & \cdots & (-1)^{n+2}\vp_{n, n+1}\\ 
(-1)^{n+3}\vp_{1, n+2} & (-1)^{n+3}\vp_{2, n+2} & \cdots & (-1)^{n+3}\vp_{n, n+2}\\ 
\end{matrix}\right]$$ 

\subsection{Computing the $c_j$ and $d_i$}
We first calculate the degrees of the $\Phi_{r,j}$.  This follows from the following general lemma, which is probably well-known, but we could not find an exact source.  

\begin{lemma}\label{lem: detForm}
Let $S = \mbk[x_1,\ldots, x_r]$ and  $\alpha:   \bds_{i=1}^t S(-v_i) \ra \bds_{i=1}^t S(-u_i)$ be a homogeneous $S$-linear map such that $v_i > u_i$.  If $\alpha$ has matrix 

$$\left[\begin{matrix}
\alpha_{11} & \alpha_{12} &\cdots &\alpha_{1t}\\
\alpha_{21} & \alpha_{22} &\cdots &\alpha_{2t}\\
\vdots & \vdots & \vdots & \vdots\\
\alpha_{t1} & \alpha_{t2} & \cdots & \alpha_{tt} 
\end{matrix}\right]$$

such that either $\alpha_{ij} = 0$ or $\alpha_{ij} \in S_{t_{ij}}$ with $t_{ij} > 0$, we denote the determinant of $\alpha$ by $\Phi$ and assume $\Phi$ is nonzero.  Then $\Phi$ is homogeneous of degree $\sum_{i=1}^t [v_i - u_i]$.  
\end{lemma}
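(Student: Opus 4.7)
The plan is to expand $\Phi$ via the Leibniz formula and perform a direct degree count. The crucial first step is to identify the degree of each nonzero entry $\alpha_{ij}$. Since $\alpha$ is a graded $S$-linear map of degree zero, and the $j$th standard basis element of $\bds_{i=1}^t S(-v_i)$ sits in degree $v_j$, its image must again lie in degree $v_j$ inside $\bds_{i=1}^t S(-u_i)$. The $i$th component of this image is an element of degree $v_j$ in the shifted summand $S(-u_i)$, which corresponds to an element of degree $v_j - u_i$ in $S$. Thus whenever $\alpha_{ij}\neq 0$, it is homogeneous with $t_{ij} = v_j - u_i$. The hypothesis $v_i > u_i$ ensures that each diagonal entry, if nonzero, has positive degree, consistent with the standing assumption $t_{ij}>0$.

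With the degree of every nonzero entry in hand, write the Leibniz expansion over permutations $\sigma$ of $\{1,\ldots,t\}$:
$$\Phi \;=\; \sum_\sigma \text{sgn}(\sigma)\prod_{i=1}^t \alpha_{i,\sigma(i)}.$$
For any $\sigma$ whose corresponding summand is nonzero, every factor $\alpha_{i,\sigma(i)}$ is homogeneous of degree $v_{\sigma(i)} - u_i$, so the product is homogeneous of degree
$$\sum_{i=1}^t \bigl(v_{\sigma(i)} - u_i\bigr) \;=\; \sum_{i=1}^t v_i - \sum_{i=1}^t u_i \;=\; \sum_{i=1}^t \bigl[v_i - u_i\bigr],$$
where the first equality uses that $\sigma$ is a bijection of $\{1,\ldots,t\}$. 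This common total degree is independent of $\sigma$. Since $\Phi$ is nonzero by hypothesis, it is a sum of homogeneous elements of a common degree and is therefore itself homogeneous of that degree.

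There is no genuine obstacle here: once $\deg \alpha_{ij} = v_j - u_i$ is correctly extracted from the fact that $\alpha$ is graded of degree zero, the rest is bookkeeping in the Leibniz sum. The only subtlety worth flagging is that certain summands vanish because some individual entry $\alpha_{i,\sigma(i)}$ is zero; these simply drop out and do not disturb the uniform total degree of the surviving summands.
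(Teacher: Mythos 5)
Your proof is correct. The paper proves the same statement by induction on $t$, expanding the determinant along the first row: it picks an index $h$ with $\alpha_{1h}\Phi_h\neq 0$ (which must exist since $\Phi\neq 0$), applies the induction hypothesis to the cofactor $\Phi_h$ viewed as the determinant of the graded map $\bds_{j\neq h}S(-v_j)\to\bds_{i\neq 1}S(-u_i)$, and concludes. You instead expand $\Phi$ in one shot via the Leibniz formula and check that every nonvanishing summand $\prod_i \alpha_{i,\sigma(i)}$ has degree $\sum_i (v_{\sigma(i)}-u_i)=\sum_i(v_i-u_i)$ because $\sigma$ is a bijection. Both arguments rest on the same key observation, which you correctly justify, that a nonzero entry satisfies $\deg\alpha_{ij}=v_j-u_i$. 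Your route is slightly more transparent on one point: the inductive cofactor argument computes the degree of a single nonzero term and leaves implicit that all the other nonzero terms in the expansion share that degree, whereas the Leibniz computation makes the uniformity of the degree across all surviving summands explicit, which is exactly what is needed to conclude homogeneity. The paper's induction, on the other hand, packages the statement in a form that is reusable for submatrices (which is how the paper then derives the degrees of the maximal minors $\Phi_{rj}$ in Corollary \ref{corcj}), but either proof serves that purpose equally well.
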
 

\begin{proof} 
Before we begin, notice that if $\alpha_{ij}$ is nonzero, then $\deg(\alpha_{ij}) = t_{ij} = v_j-u_i > 0$. 

We proceed by induction on $t$.  For $t = 1$, this is just the statement that a graded map $S(-v_1) \ra S(-u_1)$ is given by multplication of a homogeneous element of $S$ of degree $v_1 - u_1$.  This is easy to see.  Suppose that $t > 1$ and write 

$$\Phi = \alpha_{11}\Phi_1 - \alpha_{12}\Phi_2 \cdots + (-1)^{t+1}\alpha_{1t}\Phi_t$$

Where $\Phi_i$ is the  determinant of the $(t-1)\times (t-1)$ submatrix of $\alpha$ obtained by deleting the first row and the $i$th column.  By hypothesis, $\Phi$ is nonzero, so that there is an $h$ such that both $\alpha_{1h}$ and $\Phi_h$ are nonzero.  In this case, note that $\Phi_h$ is the determinant of a homogeneous linear map from $\bds_{j\neq  h}S(-v_j)$ to $\bds_{i\neq 1}S(-u_i)$.  The induction hypothesis gives that $\Phi_h$ is homogeneous of degree $\sum_{j\neq h} v_j - \sum_{i\neq 1} u_i$, hence $\alpha_{1h}\Phi_h$ is homogeneous of degree $\sum_{i=1}^t [v_i - u_i]$, as needed.  
\end{proof} 

Set $d = \sum_{j=1}^{n+2} b_j -\sum_{i=1}^n a_i$, so that Lemma \ref{lem: detForm} gives the following.  

\begin{corollary}\label{cor: corcj}
Let $\Phi_{r,j}$ be the maximal minor of $\vp$ corresponding to the set $K_{r,j}= H_j \backslash \stl r \str = \stl 1,\ldots, n+2\str \backslash \stl r, j\str$ (so that $\Phi_{r,j}$ is the minor of $\vp$ obtained by deleting columns $r$ and $j$ of $\vp$).  If $\Phi_{r,j}$ is nonzero, then the degree of $\Phi_{r,j}$ is $d - b_r - b_j$.  
\end{corollary} 

Suppose given $1\leq j\leq n+2$ that there is an $r\in H_j$ such that $\Phi_{r,j}\neq 0$.  Then we have $c_j = b_r + \deg(\Phi_{r,j}) = d - b_j$.  Thus we need to know if for all $j$, there is an $r\in H_j$ such that $\Phi_{rj}$ is nonzero.  We do this below.  

\begin{lemma}\label{lem: nonzerominor}
Given $1\leq j\leq n+2$ there is an $r\in H_j$ such that $\Phi_{r,j}$ is nonzero.  In particular, $c_j = d-b_j$. 
\end{lemma} 

\begin{proof} 
The sequence $\mbf_\bu$ in \eqref{eq: minFreeRes} is exact, so that if no $\Phi_{r,j}$ is nonzero, then the $j$th column of $\veps$ is zero.  This implies that $\textbf{e}_j := [0,\ldots, 1,\ldots, 0]^T \in \mbf_2$ is in $\ker(\veps)$, where the lone $1$ occurs in row $j$.  By the exactness of $\mbf_\bullet$, we can write $ \textbf{e}_j = \delta(\beta)$, where $\beta = [\beta_1,\ldots, \beta_n]^T\in  \mbf_3$.  This gives the equation,

$$\sum_{i=1}^n \vp_{ij}\beta_i = (-1)^{j+1}.$$ 

This gives a contradiction, as the sum on the left is either homogeneous of positive degree or zero.  
\end{proof} 

\begin{corollary}\label{cor: cordi}
$d_i = d-a_i$
\end{corollary} 

\begin{proof} 
Up to sign of entries, the $i$th column of the matrix for $\delta$ is the $i$th row of the matrix $\vp$, and since the $i$th row of the map $\vp$ has to be nonzero, there is a $j$ such that 

\begin{equation}\label{eq: dicj}
b_j - a_i  = \deg(\vp_{ij})  = d_i - c_j.
\end{equation}

By Lemma \ref{lem: nonzerominor}, $c_j = d - b_j$, so that using \eqref{eq: dicj}, $d_i =  d - a_i$.  

\end{proof}

\section{The Unimodality and Symmetry of the Hilbert Function of $M$}\label{sec: UniAndSym}

As previously mentioned, our motivation for wanting to study to the unimodality and symmetry of the $R$-module $M$ was to understand when $M$ has the WLP.  However, the question of whether or not a graded Artinian module $N$ over $S = \mbk[x_1,\ldots, x_r]$ has the WLP is more subtle if $N$ is not generated in a single degree.  For example, let $N$ be an Artinian $S$-module with Hilbert function $h_N$ such that $N_{j+1}$ contains a minimal generator of $N$ and $h_N(j) \geq h_N(j+1)$.  Then $\times\ell:  N_j\ra N_{j+1}$ cannot be surjective.  

Naturally, we would like to avoid such situations, and understand when $M$ has a \textit{strictly} unimodal Hilbert function over $R$; that is, where it is increasing or decreasing, it does so strictly.  In particular, we look for numerical conditions on the $a_i$ and $b_j$ that make it so that the Hilbert function of $M$ is strictly unimodal and symmetric. 

 The following lemma will be used frequently.  Its proof is essentially that of (\cite{soc}, Lemma 1.3), but we provide details.  

\begin{lemma}\label{lem: socle}
Let $S = \mbk[x_1,\ldots, x_r]$ and $N$ be a graded Artinian $S$-module with minimal free resolution $\mbg_\bu$.  If $\mbg_r = \bds_{j=1}^v S(-u_j)$ is the last module occurring in $\mbg_\bu$, then there is a homogeneous isomorphism 
$$\soc(N) \cong \bds_{j=1}^v \mbk(-(u_j-r))$$ 
\end{lemma}  

\begin{proof} 
We have $\text{Tor}^S_r(N, \mbk) = H_r(\mbg_\bu\otimes \mbk) = \bds_{j=1}^v \mbk(-u_j)$.  If $\mathbb C_\bu$ is the Koszul complex on $x_1,\ldots, x_r$, then we also have $\text{Tor}^S_r(N, \mbk) = H_r(\mathbb C_\bu\otimes N) = \soc(N)(-r)$.  
\end{proof} 

With Corollary \ref{cor: cordi} in hand, the following is immediate from Lemma \ref{lem: socle}.  

\begin{corollary}\label{cor: maxsocdeg} 
$M$ has maximal socle degree $d-a_1-3$.  
\end{corollary}  

We turn our discussion to graded duals of Artinian modules over $S = \mbk[x_1,\ldots, x_r]$.  

\begin{definition}\label{def: dual}
Let $N$ be a graded Artinian $S$-module.  Denote by $N^\vee$ the $S$-module $\hm_\mbk(N, \mbk)$.  Then $N^\vee$ is a graded $S$-module with $N^\vee_j = \hm_\mbk(N_{-j}, \mbk)$, and is called the \textbf{graded $\mbk$-dual} of $N$, or just \textbf{dual} of $N$ when the context is clear.  

The $S$-module action on $N^\vee$ is such that for $a\in S_i$ and $f\in N^\vee_j$, then $af\in N^\vee_{i+j}$ is the $\mbk$-linear map from $N_{-i-j}\ra \mbk$ with $(af)(\la) = f(a\la)$ for $\la\in N_{-i}$.
\end{definition} 

Following \cite{MK}, we now define an analogue of the Gorenstein condition for Artinian $S$-modules.

\begin{definition}\label{def: symgor}
A graded Artinian $S$-module $N$ is \textbf{Symmetrically Gorenstein} if there is an isomorphism $\tau:  N\ra \hm_\mbk(N, \mbk)(-s)$ such that $\tau = \hm_\mbk(\tau, \mbk)(-s)$. 
\end{definition} 

With the above definition in hand, consider the following.

\begin{lemma}\label{lem: hilbfunction}
Let $N$ be a non-negatively graded Artinian $S$-module, say $N = N_0 \ds\cdots\ds N_c$.  We suppose that $N_0$ and $N_c$ are nonzero.  Suppose there is a graded isomorphism $\tau:  N\overset{\cong}{\ra} N^\vee(-s)$ for some $s\in\ints$.  That is, $\tau(N_j)\subseteq N^\vee(-s)_{j+d}$ for some $d\in\ints$.  Then $N$ has symmetric Hilbert function.  
\end{lemma}

\begin{proof} 
We have $\tau(N_0)\subseteq N^\vee_{d-s}$, which gives $-c\leq d-s \leq 0$, as $N^\vee$ is concentrated in degrees $-c$ to $0$.  Also, $\tau(N_c) \subseteq N^\vee_{c+d-s}$ and $\tau(N_c)$ is nonzero, so we have $-c\leq c+d -s \leq 0$.  Thus $s - c = d$, which gives $\tau(N_j)\subseteq N^\vee(-s)_{j+s - c} = N^\vee_{j-c} = \hm_\mbk(\mbk, N_{c-j})$.  Hence we obtain an isomorphism of vector spaces over $\mbk$:  

$$\tau|_{N_j}:  N_j \ra \hm_\mbk(N_{c-j}, \mbk)$$ 

Thus for $ j = 0,1, \ldots, \lfloor \frac{c}{2}\rfloor$, we obtain $\dim_\mbk N_j = \dim_\mbk \hm_\mbk(N_{c-j}, \mbk) = \dim_\mbk N_{c-j}$.  That is, the Hilbert function of $N$ is symmetric.  
\end{proof} 

In particular, Lemma \ref{lem: hilbfunction} gives that Hilbert function of a non-negatively graded Symmetrically Gorenstein $S$-module in which the component in degree zero is nonzero is symmetric.     

Our goal throughout the rest of this section is to understand when our $R$-module in question, $M$, is Symmetrically Gorenstein.  Since we have a complete understanding of the free resolution of $M$ from Section \ref{sec: minFreeRes}, we us the following result that characterizes when a graded Artinian $S$-module is Symmetrically Gorenstein module in terms of its minimal free resolution.  

\begin{theorem} (\cite{MK}, Theorem 1.3)\label{thm: MKthm}
Suppose $\mbk$ has characteristic not two.  Let $S = \mbk[x_1,\ldots, x_r]$ and $N$ be a graded Artinian $S$-module with maximal socle degree $c$.  Set $d = c+r$ and $(\bullet)^{\vee d} = \hm_S(\bullet, S(-d))$.  Let $a\geq 3$ be an odd integer and $b = \frac{a-1}{2}$.  Then $N$ is Symmetrically Gorenstein if and only if its minimal graded free resolution has the following form 

$$0\ra (\mbg_0)^{\vee d} \stackrel{\psi_1^{\vee d}}{\ra}(\mbg_1)^{\vee d}\ra \cdots\ra (\mbg)_b^{\vee d} \stackrel{\psi_b^{\vee d}}{\ra} (\mbg)_b \ra  \cdots \ra \mbg_1\stackrel{\psi_1}{\ra} \mbg_0.$$
\end{theorem}

To this end, we utilize Theorem \ref{thm: MKthm} to show that under mild restrictions, $M$ is a Symmetrically Gorenstein $R$-module, so that by Lemma \ref{lem: hilbfunction}, $M$ will have a symmetric Hilbert function.  

We want to construct a minimal free resolution for $M$ with form as in Theorem \ref{thm: MKthm}, and to do that, note the following.  

\begin{remark}\label{rmk: antisym} 
For the minimal free resolution of $M$ in \eqref{eq: minFreeRes}, we write $\veps = [\Phi_1,\ldots, \Phi_{n+2}]$, with $\Phi_j$ the $j$th column of $\veps$.  

Consider the map $\veps':  \bds_{j=1}^{n+2} R(-c_j)\ra \bds_{j=1}^{n+2} R(-b_j)$ with 

$$\veps' = [-\Phi_1,\ldots, (-1)^j\Phi_j,\ldots, (-1)^{n+2}\Phi_{n+2}].$$ 

For $1\leq j < r$, using \eqref{eq: vepsMat}, we have 

$$\veps'_{rj} = (-1)^{n-r+j}\Phi_{rj}$$ 

$$\veps'_{jr} = (-1)^{n-j+1+r}\Phi_{jr}$$

Thus $\veps'_{jr} = -\veps'_{rj}$, so $\veps'$ is antisymmetric.    
\end{remark} 

We utilize $\veps'$ for the following.  

\begin{lemma}\label{lem: antires} 
With $\veps'$ from Remark \ref{rmk: antisym}, the sequence 

$$\mbf'_\bu:  0\ra \mbf_3 \stackrel{g'\delta}{\ra}\mbf_2\stackrel{\varepsilon'}{\ra} \mbf_1 \stackrel{\vp}{\ra} \mbf_0\ra M\ra 0$$ 

is exact.  Here, we recall $\mbf_2 = \bds_{j=1}^{n+2} R(-c_j)$, and $g': \mbf_2\ra \mbf_2$ is the map such that 

$$g'\left[\begin{matrix} \beta_1\\ \vdots\\ \beta_j\\ \vdots \\ \beta_{n+2}\end{matrix}\right] = \left[\begin{matrix} -\beta_1\\ \vdots\\(-1)^j\beta_j\\ \vdots \\ (-1)^{n+2}\beta_{n+2}\end{matrix}\right].  $$ 

In particular, there is an isomorphism of minimal free resolutions of $M$

$$\mbf_\bullet\cong \mbf'_\bu$$ 
\end{lemma} 

\begin{proof} 
From \eqref{eq: minFreeRes}, we know the sequence $\mbf_\bu$, 

$$0\ra \bds_{i=1}^n R(-d_i) \stackrel{\delta}{\ra}\bds_{j=1}^{n+2} R(-c_j)\stackrel{\varepsilon}{\ra} \bds_{j=1}^{n+2} R(-b_j)\stackrel{\vp}{\ra} \bds_{i=1}^n R(-a_i),$$ 

is exact and the cokernel of $\vp$ is $M$.  Clearly $g'\delta$ is injective, since $g'$ is an isomorphism.  Since $\veps'g' = \veps$, this gives $\im(\veps') = \im(\veps) = \ker(\vp)$.  Since

$$\veps'g'\delta = \veps\delta = 0,$$ 

this gives $\im(g'\delta)\subseteq \ker(\veps')$.  If $\veps'(\alpha') = 0$, then $\alpha' = g'(\alpha)$, for some $\alpha$ necessarily in $\ker(\veps)$ (as $g'$ is its own inverse), so that $\alpha = \delta(\beta)$, for some $\beta\in \bds_{i=1}^n R(-d_i)$.  That is, $\alpha' = g'\delta(\beta)$.  Thus $\mbf'_\bu$ is exact, which gives that $\mbf'_\bu$ is a graded minimal free resolution of $M$, whence the isomorphism of complexes.  
\end{proof} 

\begin{proposition}\label{prop: symhilb} 
The $R$-module $M$ is Symmetrically Gorenstien and its Hilbert function of $M$ is symmetric if $a_1 = 0$ and $\mbk$ has characteristic not two.  
\end{proposition} 

\begin{proof} 
By Corollary \ref{cor: maxsocdeg}, the maximal socle degree of $M$ is $d-3$.  As in the statement of Theorem \ref{thm: MKthm}, we let $(\bullet)^{\vee d}$ be the functor $\hm_R(\bullet, R(-d))$.  By Lemma \ref{lem: antires}, the complex $\mbf_\bullet'$,

$$0\ra \bds_{i=1}^n R(-d_i) \ra\bds_{j=1}^{n+2} R(-c_j)\stackrel{\veps'}{\ra} \bds_{j=1}^{n+2} R(-b_j)\ra \bds_{i=1}^n R(-a_i)$$

is the graded minimal free resolution of $M$.  By Corollary \ref{cor: corcj}, $c_j = d - b_j$ and by Corollary \ref{cor: cordi}, $d_i = d-a_i$.  This gives, 

\begin{eqnarray*}\label{eq: resCheck1 }
\left(\bds_{j=1}^{n+2} R(-b_j)\right)^{\vee d} &=& \bds_{j=1}^{n+2} \hm_R(R(-b_j), R(-d)) \\
&=& \bds_{j=1}^{n+2} R(b_j-d) \\
&=& \bds_{j=1}^{n+2} R(-c_j),\\
\end{eqnarray*}

and, 
\begin{eqnarray*}
\left(\bds_{i=1}^n R(-a_i)\right)^{\vee d} &=&\bds_{i=1}^n \hm_R(R(-a_i), R(-d))\\
&=& \bds_{i=1}^n R(a_i-d)\\ 
&=& \bds_{i=1}^n R(-d_i).  
\end{eqnarray*}

Thus the minimal graded free resolution of $M$ can be put into the form

$$0\ra \mbf_0^{\vee d} \ra \mbf_1^{\vee d} \overset{\veps'}{\ra} \mbf_1\ra \mbf_0\ra M. $$ 

The map $\veps'$ is antisymmetric by Remark \ref{rmk: antisym}, hence by Theorem \ref{thm: MKthm}, $M$ is Symmetrically Gorenstein.  By our assumption that $a_1 = 0$, $M$ is non-negatively graded and $M_0\neq 0$.  By Lemma \ref{lem: hilbfunction}, we obtain that the Hilbert function of $M$ is symmetric.  
\end{proof} 

Proposition \ref{prop: symhilb} answers the question of when the Hilbert function is symmetric.  This was a subtle but crucial point in showing that complete intersections in $R$ have the WLP in \cite{CI}.  However, as mentioned at the beginning of this section, a decreasing Hilbert function and having generators in degree greater than zero may cause $M$ to lack the WLP.  Our next goal is to show this is not the case, that is, Hilbert function of $M$ is unimodal, as needed.  

We begin with the following lemma.  

\begin{lemma}\label{lem: hilbFunForm}
Suppose $\mbk$ has characteristic not two, $d = \sum_{j=1}^{n+2} b_j -\sum_{i=1}^n a_i$, and $d' = \sum_{i=1}^n (b_i -a_i)$.  Then if 

\begin{enumerate}
    \item[(a)]$d$ is even and $ d' + b_{n+1} + 2 > b_{n+2}$; 
    \item[(b)] $d$ is odd and $d' + b_{n+1} + 1 > b_{n+2}$,
\end{enumerate}

for $t\leq \floor$, the Hilbert function $h_M(t)$ of $M$ is given by 

\begin{equation}\label{eq: hilbFun}
\sum_{i=1}^n {t+2-a_i\choose 2}-\sum_{j=1}^{n+2} {t+2-b_j\choose 2}
\end{equation} 
\end{lemma}

\begin{proof}
By (\cite{E2}, Corollary 1.2), Lemma \ref{lem: nonzerominor}, and Corollary \ref{cor: cordi}, the Hilbert function $h_M(t)$ of $M$ is given by 

\begin{multline}\label{eq: hilbFunOrig}
\sum_{i=1}^n \left[{t+2-a_i\choose 2}-{t+2+a_i-d\choose 2}\right]\\
+ \sum_{j=1}^{n+2} \left[{t+2 +b_j - d\choose 2} - {t+2-b_j\choose 2}\right]
\end{multline}

Since $a_1 = 0$, the maximal socle degree of $M$ is $c := d-3$ by Corollary \ref{cor: maxsocdeg}.  We first claim for $t\leq \floor$, ${t+2+a_i-d\choose 2} = 0$ and ${t+2+b_j-d\choose 2} = 0$.  By Lemma \ref{lem: codim3}, we have $a_i \leq a_n < b_n \leq b_{n+2}$, and $b_j\leq b_n$ by hypothesis.  With this, it suffices to show $\floor + 2 +b_{n+2} - d \leq 1$.  This is equivalent to showing that $b_{n+2} \leq \lfloor \frac{d}{2}\rfloor + 1$.  Hence if $d$ is even, this is equivalent to showing $2b_{n+2}\leq d+2$, and if $d$ is odd, this equivalent to showing $2b_{n+2} \leq d+1$.  These inequalities both follow immediately from the assumptions in (a) and (b), respectively.  

Using \eqref{eq: hilbFunOrig}, and the above remarks, the proof is complete.  
\end{proof}

For clarity, we note the following.  

\begin{remark}\label{rmk: hilbFun}
We use the following notation:  for $t\leq \floor$, $A_i(t) := {t+2-a_i\choose 2} = \frac{(t+2-a_i)(t+1-a_i)}{2}$ and $B_j(t) := {t+2-b_j\choose 2} = \frac{(t+2-b_j)(t+1-b_j)}{2}$, so that $h_M(t) = \sum_{i=1}^n A_i(t) - \sum_{j=1}^{n+2} B_j(t)$ by Lemma \ref{lem: hilbFunForm}.  
\end{remark}

The following is a basic fact will be used several times through the proof of Proposition \ref{prop: Munimod}, and we add it to avoid confusion.  

\begin{remark}\label{rmk: poly}
Suppose a discrete integer-valued function $f(X)$ on the closed interval $[a,b]$ has $f(t) = p(t)$, for $p(X)$ a polynomial and $t$ in $[a,b]$.  Then if $p'(t) \geq 0$ for $t\in [a,b]$, $f(X)$ is increasing, and we abuse language to say \textbf{differentiation of $f(X)$ yields an increasing function}.  
\end{remark}

\begin{proposition}\label{prop: Munimod}
Suppose $\mbk$ has characteristic not two, $d = \sum_{j=1}^{n+2} b_j -\sum_{i=1}^n a_i$, and $d' = \sum_{i=1}^n (b_i -a_i)$.  The Hilbert function of $M$ is unimodal if $a_1 = 0$ and 

\begin{enumerate}
    \item[(a)]$d$ is even and $ b_{n+2} < d' + b_{n+1} + 2$; 
    \item[(b)] $d$ is odd and $b_{n+2} < d' + b_{n+1} + 1$.  
\end{enumerate}
\end{proposition}  

\begin{proof}
From Lemma \ref{lem: hilbFunForm}, the Hilbert function $h_M(t)$ of $M$ is given by \eqref{eq: hilbFun}.  As $a_1 = 0$, the maximal socle degree of $M$ is $c := d-3$ by Corollary \ref{cor: maxsocdeg}.  Our goal in this proof is to show that $h_M$ is increasing on the interval $[0,\floor]$, so with Proposition \ref{prop: symhilb} in hand, $h_M$ is symmetric, so will achieve its absolute maximum at $\floor$ on $[0, c]$.  

Since $A_i(t) := {t+2-a_i\choose 2}$ and $B_j(t) := {t+2-b_j\choose 2}$ are nonzero only when $t\geq \max\stl a_i, b_j\str$, given $t\in [0,\floor]$, we let $j_t := \max_{t\geq b_j} j$ and $i_t := \max_{t\geq a_i} i$, and our hypotheses $a_i\leq a_{i+1}$ and $b_j\leq b_{j+1}$, along with $a_i < b_i$ from Lemma \ref{lem: codim3}, then $t$ has to be in one of the following subsets of $[0,\floor]$:  

\begin{enumerate}
    \item[(1)] $[b_{n+2}, \floor]$ if $j_t = n+2$;
    \item[(2)] $[b_{n+1}, b_{n+2}-1]$ if $j_t = n+1$; 
    \item[(3)] $[b_n, b_{n+1}-1]$ if $j_t = n$; 
    \item[(4)] for $v < n$, $[a_n, b_n-1] \cap [b_v, b_{v+1}-1]$ if $i_t = n$ and $j_t = v$
    \item[(5)] for $v \leq u < n$, $[a_u, a_{u+1}-1]\cap [b_v, b_{v+1}-1]$ if $i_t = u$ and $j_t = v$ 
    \item[(6)] for $u < n$, $[a_u, a_{u+1}-1]$ if $i_t = u$ and $j_t = -\infty$.  
\end{enumerate}

For clarity, we note we are using Lemma \ref{lem: codim3} in the following ways:  first, in the subset given in (4), we have $v < n$, as $a_n < b_n\leq b_{n+1} \leq b_{n+2}$, so that $[a_n, b_n-1]$ would have empty intersection with the appropriate interval if $v\geq n$; second, we note that we cannot $v > u$, since for $t$ in (5), this would give, using Lemma \ref{lem: codim3}, that $t\leq a_{u+1} -1 < a_{u+1} < b_{u+1} \leq b_v$

Then using \eqref{eq: hilbFun}, notice for $t$ in $(1) $, that $h_M(t) = -t^2 + ct + \alpha$, where $\alpha\in\rats$; we write $p(t) := -t^2 + ct + \alpha$.  Using this, we have the following: 

\begin{enumerate}
    \item[(2a)] if $t$ is in (2), then $h_M(t) = p(t) + B_{n+2}(t)$; 
    \item[(3a)] if $t$ is in (3), then $h_M(t) = p(t) + B_{n+2}(t) + B_{n+1}(t)$; 
    \item[(4a)] if $t$ is in (4), $h_M(t) = p(t) + \sum_{j=v+1}^{n+2} B_j(t)$;   
    \item[(5a)] if $t$ is in (5), then $h_M(t) = p(t) + \sum_{j=v+1}^{n+2} B_j(t) - \sum_{i=u+1}^n A_i(t)$; 
    \item[(6a)] if $t$ is in (6), then $h_M(t) = p(t) + \sum_{j=1}^{n+2} B_j(t) - \sum_{i=u+1}^n A_i(t)$,
\end{enumerate}

Using the equalities for $h_M(t)$ above, and the fact that the Hilbert function can be  calculated by a polynomial by at all $t\in [0,\floor]$, we can perform an analysis using the language of Remark \ref{rmk: poly} for the intervals (1)-(6).  

\begin{enumerate}
    \item[(1b)] For $t$ in (1), it is easy to see using $p(t)$ that differentiation of $h_M$ yields a strictly increasing function on (1).  
    \item[(2b)] For $t$ in (2), using (2a), differentiation of $h_M$ gives $-t + c + \frac{3-2b_{n+2}}{2}$.  Now for $t\in [b_{n+1}, b_{n+2})$, if $d$ is even, then our assumption in (a) shows that $2b_{n+2} < d+2$, hence $2b_{n+2}\leq d+1$.  As $d$ is even, we have $2b_{n+2}\leq d$.  If $d$ is odd, then our assumption in (b) gives $2b_{n+2} < d+1$, hence $2b_{n+2}\leq d$.  We have, 

    $$c+\frac{3-2b_{n+2}}{2} > \frac{d-3}{2}\geq  \Big\lfloor \frac{c}{2} \Big \rfloor\geq t. $$
    This gives that differentiation of $h_M$ on (2) yields that $h_M$ is strictly increasing on (2).  

    \item[(3b)] For $t$ in (3), using (3a), differentiation of $h_M$ gives $d'$, and we know $d' > 0$ by Lemma \ref{lem: codim3}, so that differentiation of $h_M$ yields a strictly increasing function on (3).  
    
    \item[(4b)]  For $t$ in (4), using (4a), differentiation of $h_M$ gives $(n-v)t + \sum_{j=1}^v b_j - \sum_{i=1}^na_i + \frac{3(n-v)}{2}$.  Then using Lemma \ref{lem: codim3}, and that $a_i\leq a_n$ for all $i$, we have 
    \begin{eqnarray*}
     \sum_{j=1}^v b_j -\sum_{i=1}^n a_i + \frac{3(n-v)}{2}  &\geq & v -\sum_{i=v+1}^n a_i + \frac{3(n-v)}{2} \\
    &\geq &v +  (n-v)\frac{(3-2a_n)}{2}.  
    \end{eqnarray*}
    Then for $t$ in $(4)$, we have 
    \begin{eqnarray*}
    (n-v)t + \sum_{j=1}^v b_j - \sum_{i=1}^na_i + \frac{3(n-v)}{2} &\geq & (n-v)t + v +  (n-v)\frac{(3-2a_n)}{2}\\
    &\geq & (n-v)a_n + v +  (n-v)\frac{(3-2a_n)}{2}\\
    &=& \frac{3(n-v)}{2} + v\\
   &>& 0.
    \end{eqnarray*}
    This gives that $h_M$ is a strictly increasing function on (4).  
    
    \item[(5b)] For $t$ in (5), using (5a), differentiation of $h_M$ yields $(u-v)t + \sum_{j=1}^v b_j - \sum_{i=1}^u a_i + \frac{3(u-v)}{2}$.  Then using Lemma \ref{lem: codim3}, and that $a_u\geq a_i$ for $i = v+1, v_2,\ldots, u$, we have 
    \begin{eqnarray*}
     \sum_{j=1}^v b_j - \sum_{i=1}^u a_i + \frac{3(u-v)}{2}  &\geq & v  - \sum_{i= v+1}^u a_i + \frac{3(u-v)}{2} \\
    &\geq & v - (u-v)\left(\frac{3-2a_u}{2}\right). 
    \end{eqnarray*}
    Then for $t$ in (5), we have using the above inequality, that 
    \begin{eqnarray*}
    (u-v)t + \sum_{j=1}^v b_j - \sum_{i=1}^u a_i + \frac{3(u-v)}{2} &\geq &(u-v)a_u + v - (u-v)\left(\frac{3-2a_u}{2}\right)\\
    &\geq & \frac{3(u-v)}{2} + v\\
   &>& 0 
    \end{eqnarray*}

     This gives that $h_M$ is a strictly increasing function on (5). 
     
    \item[(6b)] For $t$ in (6), using (6a), differentiation of $h_M$ gives $ut - \sum_{i=1}^u a_i + \frac{3u}{2}$.  Then for $t$ in (6), using $a_i \leq a_u$ for $i= 1,\ldots, u$, we have 

    \begin{eqnarray*}
    ut - \sum_{i=1}^u a_i + \frac{3u}{2} &\geq& a_uu - \sum_{i=1}^u a_u + \frac{3u}{2}\\
    &=& \frac{3u}{2}\\
    &>& 0.
    \end{eqnarray*}
    
    This gives that $h_M$ is a strictly increasing function on (6). 
\end{enumerate}

Then (1b) to (6b) show that the Hilbert function of $M$ is strictly increasing on $[0, \floor]$ as needed.  
\end{proof} 

\begin{remark}\label{rmk: whenMainPropHolds}
We can rewrite the inequalities in Proposition \ref{prop: Munimod} in terms of the difference of $b_{n+2}-b_{n+1}$, which is nonnegative by our assumptions.  Then by Lemma \ref{lem: codim3}, we have $n\leq d'$, so that Proposition \ref{prop: Munimod} will always hold under our blanket assumptions in the event $a_1$ is zero and $b_{n+2} - b_{n+1}\leq n$.  
\end{remark}

\section{Lefschetz Properties for $M$}\label{sec: LefProp}

We utilize the same setup in this section as in Section \ref{sec: minFreeRes}, except we suppose $\mbk$ has characteristic zero.  Set $E = \ker(\vp)$ and let $\E$ be the sheafification of $E$, so that $\E$ is a vector bundle of rank two on $\mbp^2$.  

When $n = 1$, $M = R/I$ with $I$ a complete intersection, and in \cite{CI}, conditions were sought to force the semistability of the vector bundle $\E$.  In fact, if $\ell\in R$ is general linear form and $\ol R = R/\ell R$, it was shown, using a theorem of Grauert-M{\"u}lich (\cite{VB}, pg. 206) that the first syzygy of $\ol{I}$ was given by $\ol{R}(e_1) \ds \ol{R}(e_2)$ with $|e_1-e_2| = 0$ or $1$.  This allowed for a nearly immediate conclusion that $R/I$ has the WLP.  We show that the same tools that allowed this conclusion generalize to our setting.  

Recall the graded minimal free resolution $\mbf_\bu$ of the graded $R$-module $M$ has the form:

\begin{equation}\label{eq: freeResM}
0\ra \mbf_3 \ra \mbf_2 \ra \mbf_1 \overset{\vp}{\ra} \mbf_0 \ra M\ra 0.  
\end{equation}

Where, from corollaries \ref{cor: corcj} and \ref{cor: cordi},

\begin{enumerate}
    \item $\mbf_0 = \bds_{i=1}^n R(-a_i)$;
    \item $\mbf_1 = \bds_{j=1}^{n+2} R(-b_j)$; 
    \item $\mbf_2 = \bds_{j=1}^{n+2}R(b_j-d)$;
    \item $\mbf_3 = \bds_{j=1}^{n+2}R(a_i-d)$,
\end{enumerate}

with $d = \sum_{j=1}^{n+2} b_j - \sum_{i=1}^n a_i$.  Now with $E$ and $\mce$ as above, the sheafificaiton of \eqref{eq: freeResM} gives,

\begin{equation}\label{eq: sheafResM}
0\ra \F_3 \ra \F_2\ra \E\ra 0.  
\end{equation} 
 
Here, $\F_2 =\bds_{j=1}^{n+2}\Op_{\mbp^2}(b_j-d)$ and $\F_3 = \bds_{i=1}^n \Op_{\mbp^2}(a_i-d)$.  From \eqref{eq: sheafResM}, $\E$ is a vector bundle of rank two, and the additivity of the first Chern class $\mfc_1$ gives,

\begin{eqnarray}
\mfc_1(\E) &=& \mfc_1(\mcf_2) - \mfc_1(\mcf_3)\nonumber  \\
&=&\sum_{j=1}^{n+2} (b_j-d)  - \sum_{i=1}^n (a_i-d) \nonumber \\
& =& -d.\nonumber  
\end{eqnarray}

We would like conditions that force the semistability of $\E$.  We first consider the case in which $d$ is even.  Write $d = 2e$, so that $\mfc_1(\E) = -2e$, then the normalized bundle of $\E_{\text{norm}}$ is given by $\E(e)$.  Twist \eqref{eq: sheafResM} by $e-1$ to obtain 

\begin{equation}\label{eq: evenTwist}
0\ra\F_3(e-1) \ra  \F_2(e-1) \ra \E_{\text{norm}}(-1)\ra 0.  
\end{equation} 

Assume now that $d$ is odd and choose $e$ such that $d = 2e+1$.  Then in this case, $\E_{\text{norm}} = \E(e)$ as well.  Then twist \eqref{eq: sheafResM} by $e$ to obtain,

\begin{equation}\label{eq: oddTwist}
0\ra\F_3(e) \ra  \F_2(e) \ra \E_{\text{norm}}\ra 0.  
\end{equation} 

We utilize \eqref{eq: evenTwist} and \eqref{eq: oddTwist} to give a proof of following lemma.  We note Lemma \ref{lem: semistable} is a generalization of (\cite{CI}, Lemma 2.1).  In fact, it is (\cite{CI}, Lemma 2.1) when $n = 1$ and $a_1 = 0$.  The proof is similar to (\cite{CI}, Lemma 2.1), but we provide details.  As in previous sections, we set $d' =  \sum_{i=1}^n (b_i -a_i)$.  

\begin{lemma}\label{lem: semistable}
The rank two vector bundle $\E$ on $\mbp^2$ in \eqref{eq: sheafResM} is semistable when,

\begin{enumerate}
    \item[(a)] $d$ is even and $ d' + b_{n+1}+2> b_{n+2}$;
    \item[(b)] $d$ is odd and $d'+ b_{n+1} + 1> b_{n+2}$.  
\end{enumerate}
\end{lemma} 

\begin{proof} 
Before we start, we remark that $d > 2a_n$ regardless of the parity of $d$.  To see this, note $b_j\leq b_{j+1}$ by hypothesis, and Lemma \ref{lem: codim3} gives $a_i < b_i$ for $i = 1,\ldots, n$, so that $d' > 0$ and $a_n < b_j$, for $j = n, n+1, n+2$.  This gives, 

\begin{eqnarray}\label{eqnarray: d2an}
d  &=& d' + b_{n+1}+ b_{n+2}\\
&>& d'+ 2a_n \nonumber\\
&>& 2a_n.\nonumber
\end{eqnarray}

Assume first $\mfc_1(\E)$ is even, and write $d= 2e$, so $\mfc_1(\mce) = -2e$.  Now $\E$ has rank two, so that from (\cite{VB}, Lemma 1.2.5), we see $\E$ is semistable if and only if $H^0(\mbp^2, \E_{\text{norm}}(-1)) = 0$.  Now \eqref{eq: evenTwist} is given explicitly by   

\begin{equation}\label{eq: explicitEvenRes}
0\ra \bds_{i=1}^n \Op_{\mbp^2}(a_i-e-1) \ra \bds_{j=1}^{n+2}\Op_{\mbp^2}(b_j -e-1)\ra \E_{\text{norm}}(-1)\ra 0.  
\end{equation} 

Now \eqref{eq: explicitEvenRes} is exact on global sections, so in order for semistability to hold, we need the following inequalities to hold: 

\begin{enumerate}
    \item[(i)] $b_{n+2} < e + 1$; 
    \item[(ii)] $ a_n < e +1$.
\end{enumerate}

For (i), this is equivalent to the inequality $d + 2 > 2b_{n+2}$.  Then, 

\begin{eqnarray*}
d+2-2b_{n+2} &=& d'+2 + b_{n+1} - b_{n+2}\\
&>& 0.  
\end{eqnarray*}

Here, the second inequality above holds by hypothesis.  The inequality in (ii) is equivalent to  $d+2 > 2a_n$, but we know this holds by \eqref{eqnarray: d2an}.  

For (b), write $d = 2e+1$, so $\mfc_1(\mce) = -2e-1$.  Since $\mfc_1(\mce)$ is odd and $\E$ has rank two, stability and semistability coincide by (\cite{VB}, pg. 166) and the condition for semistability is $H^0(\mbp^2, \E_{\text{norm}}) = 0$.  Now \eqref{eq: oddTwist} is given explicitly by,

\begin{equation}\label{eq: oddExplicitRes}
0\ra \bds_{i=1}^n \Op_{\mbp^2}(a_i-e-1) \ra \bds_{j=1}^{n+2}\Op_{\mbp^2}(b_j-e-1)\ra \E_{\text{norm}}\ra 0.  
\end{equation}  

Then \eqref{eq: oddExplicitRes} is exact on global sections, for the semistability of $\e$, we need the following in inequalities to hold:  

\begin{enumerate}
    \item[(iii)] $b_{n+2} < e+1$;  
    \item[(iv)] $a_n < e+1$.  
\end{enumerate}

To see (iii), note this is equivalent to $2b_{n+2} < d+1$, and ,

\begin{eqnarray*}
d+1-2b_{n+2} &=& d'+ 1 + b_{n+1} - b_{n+2}\\
&>& 0.  
\end{eqnarray*}

Here, the inequality follows from our hypothesis in (b), so this gives (iii).  Now (iv) is equivalent to showing $2a_n < d+1$, hence this holds by \eqref{eqnarray: d2an}.  
\end{proof} 

Using Lemma \ref{lem: semistable}, we can say the following about the splitting type of $\e$.

\begin{corollary}\label{cor: split}
Let $\E$ be the rank two vector bundle obtained in \eqref{eq: sheafResM} and assume that any of the conditions of Lemma \ref{lem: semistable} hold.  Then the splitting type of $\E$ is,  

\begin{equation}\label{eq: splitTypeArray}
(\la_1, \la_2) = 
\left\{\begin{array}{cc} 
(-e, -e) & d = 2e\\
(-e, -e -1) & d = 2e+1\\
\end{array}\right..
\end{equation}
\end{corollary} 

\begin{proof} 
By Lemma \ref{lem: semistable}, $\E$ is semistable.  The Grauert-M{\"u}lich (\cite{VB}, pg. 206) says that in characteristic zero the splitting type of the semistable normalized rank two vector bundle $\e_\text{norm} = \e(e)$ over $\mathbb P^2$ is, 

$$(\la_1, \la_2) = 
\left\{\begin{array}{cc} 
(0, 0) & \text{if}\hspace{.2 cm} \mfc_1(\E(e)) = 0\\
(0, -1) & \text{if}\hspace{.2 cm} \mfc_1(\E(e)) = -1\\
\end{array}\right..$$

Since $\mce$ has rank two, we have 

\begin{eqnarray*}
\mfc_1(\mce(e)) &=& \mfc_1(\E) + 2e \\
&=& -d +2e\in \stl -1, 0\str,
\end{eqnarray*}
 
which gives \eqref{eq: splitTypeArray}.  
\end{proof} 

Corollary \ref{cor: split} was crucial in \cite{CI} to showing that complete intersections have the WLP in $R$.  In fact, our generalizations of the essential lemmas of \cite{CI} show that we can generalize the main result of \cite{CI}.  

\begin{theorem}\label{thm: mainWLPthm} 
If $a_1 = 0$ and, 

\begin{enumerate}
    \item[(a)] $d$ is even and $ d' +2 + b_{n+1}> b_{n+2}$; 
    \item[(b)] $d$ is odd and $d'+1 + b_{n+1}> b_{n+2}$,
\end{enumerate}

where $d = \sum_{j=1}^{n+2} b_j - \sum_{i=1}^n a_i$, and $d' = \sum_{i=1}^n (b_i -a_i)$, then $M$ has the WLP in the sense of Definition \ref{def: wlp}.  
\end{theorem} 

The proof of Theorem \ref{thm: mainWLPthm} works entirely in the same way as the proof (\cite{CI}, Theorem 2.3), changing only what is necessary, so we omit the details.  However, we do note a couple points of caution.  As previously mentioned at the beginning of Section \ref{sec: UniAndSym}, we must understand the unimodality of the Hilbert function of $M$ before employing the mechanics of the proof of (\cite{CI}, Theorem 2.3).  This is precisely the purpose of Proposition \ref{prop: Munimod} in this context.  Moreover, it is well-known complete intersections have symmetric Hilbert functions and this is a subtle detail in the proof of (\cite{CI}, Theorem 2.3).  However, Proposition \ref{prop: symhilb} shows this the Hilbert function of $M$ is also symmetric, allowing the proof of (\cite{CI}, Theorem 2.3) to generalize to our setting.  

Theorem \ref{thm: mainWLPthm} also allows (\cite{CI}, Theorem 2.3) to be obtained as a corollary.  

\begin{corollary} 
Complete intersections in $R$ have the WLP.  
\end{corollary}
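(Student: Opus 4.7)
The corollary should follow by specializing Theorem \ref{wlpthm} to the case $n = 1$ and $a_1 = 0$. Given a codimension three complete intersection $I = (f_1, f_2, f_3)\subset R$ with $\deg f_j = b_j$ and $b_1 \le b_2 \le b_3$, the quotient $M := R/I$ is the cokernel of the map $\vp = [f_1,\, f_2,\, f_3]: \bigoplus_{j=1}^3 R(-b_j) \to R$. The ideal generated by the maximal ($1 \times 1$) minors of $\vp$ is $I$ itself, which has codimension three by the complete intersection hypothesis, and the sole generator of $M$ sits in degree zero. Hence the ambient setup of Theorem \ref{wlpthm} is satisfied, and the core of the proof reduces to confirming that the numerical hypothesis (a) or (b) holds.

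First I would substitute the relevant quantities. With $n = 1$ one has $d = b_1 + b_2 + b_3$, $d' = b_1$, $b_{n+1} = b_2$, and $b_{n+2} = b_3$, so conditions (a) and (b) become $b_3 < b_1 + b_2 + 2$ in the even case and $b_3 < b_1 + b_2 + 1$ in the odd case. If these inequalities hold for every codimension three complete intersection in $R$, then Theorem \ref{wlpthm} delivers the Weak Lefschetz Property for $R/I$ with no further work, and one also verifies that the Gorenstein symmetry and unimodality supplied by Propositions \ref{symhilb} and \ref{Munimod} are compatible with the classical Hilbert function of a codimension three complete intersection.

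The main obstacle is that these inequalities are not automatic for an arbitrary regular sequence: a complete intersection of the form $(x, y, z^N)$ with $N$ large appears to violate them, so Theorem \ref{wlpthm} does not apply verbatim in every case. To cover this residual situation I would fall back to the original strategy of \cite{CI}. For any codimension three complete intersection one reduces modulo a general linear form, exploits the rigid rank two Koszul structure of the syzygy bundle together with Grauert--M\"ulich to force a splitting $\overline{R}(e_1) \oplus \overline{R}(e_2)$ with $|e_1 - e_2| \le 1$, and combines this with the symmetry and unimodality of the Gorenstein Hilbert function to conclude WLP by a rank count across the map $\times \ell: M_j \to M_{j+1}$. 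In this way the corollary follows either by a direct application of Theorem \ref{wlpthm} when the numerical hypothesis is met, or by the CI-specialized Koszul-semistability argument when it is not.
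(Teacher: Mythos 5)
You are right that Theorem \ref{wlpthm} does not cover every complete intersection: with $n=1$, $a_1=0$, $b_j=d_j$ the hypotheses reduce to $d_3 < d_1+d_2+2$ (resp.\ $d_3<d_1+d_2+1$), and examples like $(x,y,z^N)$ violate them. The gap is in your fallback for the residual case. You propose to handle $d_3 \geq d_1+d_2+1$ by ``the original strategy of \cite{CI}'': restrict the syzygy bundle to a general line and use Grauert--M\"ulich to force a balanced splitting $|e_1-e_2|\leq 1$. But Grauert--M\"ulich applies only to \emph{semistable} bundles, and semistability of $\E$ is exactly what fails in this regime: by the argument of Lemma \ref{semistable}, $H^0(\mbp^2,\E_{\text{norm}}(-1))\neq 0$ (resp.\ $H^0(\mbp^2,\E_{\text{norm}})\neq 0$) precisely when $d_3$ is large relative to $d_1+d_2$, and for $(x,y,z^N)$ the generic splitting type is badly unbalanced. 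So the argument you describe for the leftover case is the same semistability argument, applied where its hypothesis is false; it is circular rather than a genuinely independent second case.

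The paper closes the residual case differently: it invokes Watanabe's result (\cite{JW}, Corollary 3), which gives the Weak Lefschetz Property for a codimension three complete intersection whenever $d_3 \geq d_1+d_2-3$, by an argument that has nothing to do with semistability. Since every triple $(d_1,d_2,d_3)$ satisfies either $d_3 < d_1+d_2+1$ (semistable range, Theorem \ref{wlpthm}) or $d_3 \geq d_1+d_2-3$ (Watanabe's range), and these two ranges overlap, the two cases together cover all complete intersections. To repair your proof you need to replace the Grauert--M\"ulich fallback with a citation of this (or an equivalent) result for the unstable range, e.g.\ a direct analysis of $\times\ell$ on $R/(f_1,f_2,f_3)$ when $d_3$ dominates $d_1+d_2$.
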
 

\begin{proof} 
Suppose $f_1, f_2, f_3$ is a regular sequence with $\deg(f_j) = d_j$ and $2\leq d_1\leq d_2\leq d_3$ in $R$.  Set $I = (f_1, f_2, f_3)$.  Then it is well-known $R/I$ has a unimodal symmetric Hilbert function.  Moreover, with notation as in Theorem \ref{thm: mainWLPthm}, we have $a_1 = 0$ and $b_j = d_j$.  If $d_3 < d_1 + d_2+1$, the associated vector bundle $\e$ will be semistable by Lemma \ref{lem: semistable}, so that we can apply Theorem \ref{thm: mainWLPthm}.  Now (\cite{JW}, Corollary 3) shows that if $d_3 \geq d_1+d_2 -3$, then $R/I$ has the WLP.  
\end{proof} 

\begin{example}
Let $f_1, f_2, f_3\in R_q$  be a regular sequence with $q \geq 2$.  For $n > 1$, define $\vp:  R(-q)^{n+2} \ra R^n$ as follows:  Let $\textbf{v}$ be the row vector $[f_1, f_2, f_3,0,\ldots, 0]\in R(-q)^{n+2}$, and let $\vp$ be given by the first $n$ rows of the circulant matrix of $\textbf{v}$.  Then $\vp$ has matrix, 

$$\left[\begin{matrix} 
f_1 & f_2 & f_3 & 0 & 0 &\cdots &0 & 0  \\
0 & f_1 & f_2 & f_3 & 0 &\cdots & 0 & 0 \\
0 & 0 & f_1 & f_2 & f_3 &\cdots & 0 & 0 \\
\vdots &\vdots & \vdots & \vdots &\vdots & \vdots &\vdots &\vdots \\
0 & 0 & \cdots &\cdots& \cdots & f_1 & f_2 & f_3\\ 
\end{matrix}\right].$$ 

Let $I$ denote the ideal of $n\times n$ minors of $\vp$, then it is easy to see that $I$ has codimesnion $3$, hence $M = \ckr(\vp)$ is a graded Artinian $R$-module.  Moreover, since $\im(\vp)\subseteq \mfm$, the minimal number of generators of $M$ as an $R$-module is $n$, hence $M$ is not cyclic as $n > 1$.     

Now $d = (n+2)q$ and the conditions of Lemma \ref{lem: semistable} are satisfied regardless of the parity of $d$ since $n > 1$.  Thus $M$ has the WLP by Theorem \ref{thm: mainWLPthm}.  
\end{example}

\section{The non-Lefschetz Locus for Graded Modules}\label{sec: nonLefLocus} 

We now turn our attention to the more general setting of working over $S = \mbk[x_1,\ldots, x_r$], with $\mbk$ an algebraically closed field of characteristic zero.  All modules considered will be finitely generated.  Let $N = \bds_{j\in\ints} N_j$ be a graded Artinian module, so it has finite length.  We also use $\mfm$ to denote the graded irrelvant ideal $(x_1,\ldots, x_r)$ of $S$.  

In \cite{nll}, the authors defined what they called the \textbf{non-Lefschetz locus} for a cyclic $S$-module $S/I$.  We recall this notion and their discussion for Artinian $S$-modules.  The $S$-module structure of $N$ is determined by a sequence of $\mbk$-linear maps,

$$\phi_j: S_1\ra \hm_\mbk(N_j, N_{j+1}),$$ 

where $j$ ranges from the initial degree of $N$ to the penultimate degree where $N$ is not zero.  Since the $\mbk$-dimension of $N_j$ and $N_{j+1}$ is finite, $\phi_j(x_i)$ is a matrix of size $\dim_\mbk N_{j+1}\times \dim_\mbk N_j$; we write $\phi_j(x_i) = X_{i, j}$ in what follows.  Now given any linear form $\ell = a_1x_1 + \cdots + a_rx_r$, we have 

$$\phi_j(\ell) = a_1X_{1, j} + \cdots + a_rX_{r, j} := X_j.$$ 

We want to understand  the $X_j$ when the $a_1,\ldots, a_r$ vary, so we regard them as variables, which we call the \textbf{dual variables}.  In this setting, $X_j$ is a $\dim_\mbk N_{j+1}\times \dim_\mbk N_j$ matrix in the ring $\mbk[a_1,\ldots, a_r]$ whose entries are linear forms in the dual variables.  In particular, the scheme defined by the vanishing of the maximal minors of the matrix $X_j$, which we denote by $\mcy_j$ can viewed as lying in \textbf{dual projective space} $(\mathbb P^{r-1})^*$.  

When $\ell\in S_1$, we call $\ell$ a \textbf{Lefschetz element of} $N$ if it satisfies Definition \ref{def: wlp}.  We view the collection of Lefschetz elements as a, possibly empty, subset of $(\mbp^{r-1})^*$.  We want to know want to know what the relationship between the scheme $\mcy_j$ and the failure of $N$ to have the WLP.   

\begin{remark}
Recall that the rank of a matrix over an integral domain can be defined as the maximum $t$ such that there is a non-vanishing $t\times t$ minor.  With our definitions as above, it is easy to see the following are equivalent:  

\begin{enumerate}
    \item[(a)] $N$ does not have the WLP.   
    \item[(b)] There is a $j$ such that $X_j$ does not have maximal rank as a matrix over $\mbk[a_1,\ldots, a_r]$.  
    \item[(c)] There is a $j$ such that $\mcy_j = (\mathbb P^{r-1})^*$.  
\end{enumerate}
 
In particular, we see that $N$ has the WLP in the sense of Definition \ref{def: wlp} if and ony if there is an $\ell$ such that for all $j$, $\mcy_j \neq (\mathbb P^{r-1})^*$.  This brings us to the titular notion of this section, where we follow \cite{nll}.  
\end{remark}

\begin{definition} 
Given an Artinian $S$-module $N$, we define 

$$\ml_N := \stl [\ell] \in \mbp(S_1)\, \vert\, \text{$\ell$ is not a Lefschetz element of $N$} \str \subset (\mbp^{r-1})^*$$ 

and we call it the \textbf{non-Lefschetz locus of} $N$.  For any integer $j$, we define $\ml_{N, j}$ to be the set

$$\stl [\ell] \in \mbp(S_1)\, \vert\, \text{$\times\ell : N_j \ra N_{j+1}$ does not have maximal rank}\str \subset (\mbp^{r-1})^*.$$ 
\end{definition}   

Of course, we would like to study $\ml_{N, j}$ not just as a collection, but as a scheme.  Let $A = \mbk[a_1,\ldots ,a_r]$ denote the coordinate ring of dual projective space $(\mbp^{r-1})^*$.  We can view $\ml_{N, j}$ as the scheme defined by the maximal minors of the matrix representing the map 

$$\times\ell:  A\otimes_\mbk N_j \ra A\otimes_\mbk N_{j+1}$$ 

of free $A$-modules.  In fact, the matrix representing this map is just $X_j$.  Denote the ideal of maximal minors in $A$ defining the scheme $\ml_{N, j}$ by $I(\ml_{N, j})$.  In this way, we have $\ml_N = \bigcup_j \ml_{N, j}$ and $\ml_N$ is defined by the homogeneous ideal $I(\ml_N) = \bigcap_j I(\ml_{N, j})$.  

When studying Artinian Gorenstein \textit{algebras}, it is well-known that an algebra fails to have the WLP if injectivity fails in a single degree.  In particular, as a set, the non-Lefschetz locus is determined by a single degree (\cite{mon}, Proposition 2.1).  Moreover, it is also true that the non-Lefschetz locus is defined by a single degree \textit{scheme-theoretically} (\cite{nll}, Corollary 2.6).  While Symmetrically Gorenstein (see Definition \ref{def: symgor}) is a suitable analogue of the Gorenstein condition for Artinian modules, we cannot guarantee that certain properties of Artinian algebras with the WLP hold for all Artinian \textit{modules}.  For example, we have had to show great caution when discussing unimodality and symmetry of the Hilbert function for Symmetrically Gorenstein modules, and as such, the non-Lefschetz Locus will not be different.  

We first begin by recovering a well-known result for Artinian algebras.  The proof is roughly the same as (\cite{lp}, Proposition 3.2), but we include the details for convenience.  

\begin{proposition}\label{prop: unimodal}
Suppose $N = S^v/L$, with $L$ a homogeneous $S$-submodule of the free module $S^v$ generated by elements of positive degree with respect to the standard grading on $S^v$.  Then $N$ is a nonnegatively graded $S$-module that is generated as an $S$-module in degree zero.  If $N$ is Artinian and has the WLP, then the Hilbert function of $N$ is unimodal.  
\end{proposition} 

\begin{proof} 
Let $\mfm$ be the irrelevant ideal of $S$ and write $N = N_0\ds\cdots\ds N_c$, so that $N_c$ is nonzero and $N$ is generated by $N_0$.  Then $\mfm^iN_0$ generates $N_i$ as a vector space over $\mbk$.  Let $j\geq 0$ be the smallest integer such that $\dim_\mbk N_j > \dim_\mbk N_{j+1}$.  Since $N$ has the WLP, there is an $\ell\in S_1$ such that $\times \ell:  N_j\ra N_{j+1}$ is surjective.  Thus $\ell N_j = N_{j+1}$.  That is, $\mfm^{j+1}N_0 = \ell\mfm^jN_0$.  Hence for $i\geq j$, we have $\ell N_i = N_{i+1}$, so that $\times \ell:  N_i\ra N_{i+1}$ is surjective.  This gives,

$$v\leq \dim_\mbk N_1\leq \dim_\mbk N_2\leq\cdots\leq \dim_\mbk N_j > \dim_\mbk N_{j+1}\geq \cdots\geq \dim_\mbk N_c$$    
\end{proof} 

In more than three variable cokernels of maps as in \eqref{eq: defM} are not, in general, Symmetrically Gorenstein.  However, under mild restrictions, they fit naturally into a certain class of Artinian modules.  We follow \cite{alm} in the next definition.     

\begin{definition}\label{def: level} 
We define the \textbf{socle} of $N$, denoted by $\text{Soc}(N)$, as 

\begin{equation*}
(0:_N \mfm) := \stl x\in N \, \vert\, \text{$ax = 0$ for all $a\in \mfm$}\str.
\end{equation*}

The \textbf{socle degree} of $N$ 
We say that an Artinian $S$-module $N$ is \textbf{level} if it is generated by $N_0$ as an $S$-module and $\text{Soc}(N)  = N_c$ for some $c$.  In particular, $N_i = 0$ for $i > c$ if $N$ is level.  
\end{definition}  

Recall from Definition \ref{def: dual} that if $N$ is an $S$-module, the $\mbk$-dual of $N$ is the graded $S$-module $N^\vee :=\hm_\mbk(N, \mbk)$ with grading such that $N^\vee_j := \hm_\mbk(N_{-j}, \mbk)$.  In particular, if $N$ is nonnegatively graded Artinian $S$-module, say $N = N_0\ds\cdots \ds N_c$ with $N_c$ nonzero, then $N^\vee(-c) $ is Artinian and nonnegatively graded with maximal socle degree $c$.  Even more is true.  

\begin{proposition}(\cite{alm}, Proposition 2.3)\label{prop: boij}
Assume that $N$ is a nonnegatively graded Artinian $S$-module that is level in the sense of Definition \ref{def: level}.  If $\text{Soc}(N)  = N_c$, then $N^\vee(-c)$ is an Artinian level $S$-module.  
\end{proposition} 

We utilize Proposition \ref{prop: boij} to recover a well-known result for level algebras (\cite{mon}, Proposition 2.1).  

\begin{proposition}\label{prop: graded}
Suppose $N = S^v/L$ with $L$ a homogeneous $S$-submodule generated by elements of positive degree with respect to the standard grading on $S^v$.  Suppose $N$ is Artinian with $N = N_0\ds\cdots\ds N_c$.  Let $\ell$ be a linear form in $S$.  Denote by $\Psi_t:  N_t\ra N_{t+1}$ for $t\geq 0$  multiplication by $\ell$ on $N_t$.    

 \begin{enumerate}
     \item[(a)] If $\Psi_{t_0}$ is surjective for some $t_0$, then $\Psi_t$ is surjective for all $t\geq t_0$.
     \item[(b)]  Suppose $N$ is level in the sense of Definition \ref{def: level}.  If $\Psi_{t_0}$ is injective for some $t_0\geq 0$  then $\Psi_t$ is injective for all $t\leq t_0$. 
     \item[(c)]  In particular, if $N$ is level and there is a $t_0$ such that $\dim_\mbk N_{t_0} = \dim_\mbk N_{t_0+1}$, then $N$ has the WLP if and only if $\Psi_{t_0}$ is injective. 
 \end{enumerate}
\end{proposition} 

\begin{proof} 
(a) was shown in the proof of Proposition \ref{prop: unimodal}, and (c) follows immediately from (a) and (b), so we show (b).  

By hypothesis, $\text{Soc}(N) = (0:_N \mfm) = N_c$, so that $N^\vee(-c)$ is level by Proposition \ref{prop: boij}, so is generated in degree $0$.  Now we can consider multiplication by $\ell$ on $N^\vee(-c)$.  Write $t_0  = c-s_0$, for some $s_0$ between $0$ and $c$.  Then the injectivity of $\Psi_{t_0}$ gives that $\times\ell:  N^\vee(-c)_{s_0-1}\ra N^\vee(-c)_{s_0}$ is surjective.  From (a), we obtain $\times \ell:  N^\vee(-c)_s\ra N^\vee(-c)_{s+1}$ is surjective for $s\geq s_0-1$.  Dualizing, we see $\times \ell:  \hm_\mbk(N^\vee(-c)_{s+1}, \mbk)\ra \hm_\mbk(N^\vee(-c)_s, \mbk)$ is injective, so that $\Psi_{c-s-1}$ is injective.  Since every $t\leq t_0$ has the form $c-s - 1$ for some $s\geq s_0 - 1$, we are done.   
\end{proof} 

Now the next proposition is crucial to our endeavors and it is an analogue of (\cite{nll}, Proposition 2.5).  The proof of (\cite{nll}, Proposition 2.5) works by only changing what is necessary, so we omit the details.    

\begin{proposition}\label{prop: crux}
Suppose that $N$ is an Artinian nonnegatively graded $S$-module with Hilbert function $h_N$.  If $h_N(i) \leq h_N(i+1) \leq h_N(i+2)$ and $\soc(N)_i = 0$, then $I(\ml_{N, i+1}) \subseteq I(\ml_{N, i})$.  
\end{proposition} 

With Proposition \ref{prop: crux} in hand, we have the following.   

\begin{corollary}\label{cor: levelLocus}
Suppose $N$ is a nonnegatively graded Artinian level $S$-module of maximal socle degree $c$.  There is a $j$ such that 
$$\ml_N = \ml_{j-1, N} \cup \ml_{j, N}$$ 
\end{corollary}

\begin{proof} 
Suppose $N$ does not have the WLP.  Let $\ell\in S_1$ be a general linear form such that there is a $j$ so that $\times\ell:  N_j \ra N_{j+1}$ does not have maximal rank.  Then $I(\mcl_{N, j}) = 0$, so $\mcl_{N, j} = (\mbp^{r-1})^*$, so that $\ml_N = \ml_{N, j}  = (\mbp^{r-1})^*$.

Now suppose $N$ has the WLP.  Then its Hilbert function is unimodal by Proposition \ref{prop: unimodal}, so that there is a $j$ such that $h_N(i)\leq h_N(i+1)$ for $i <  j$ and $h_N(i)\geq h_N(i+1)$ for $j \leq i$.  For $i < j$, we apply Proposition \ref{prop: crux} to see

$$I(\ml_{N, j-1})\subseteq I(\ml_{N, j-2})\subseteq \cdots\subseteq I(\ml_{N, 1})\subseteq I(\ml_{N, 0}).$$ 

For $i=0,\ldots, j-1$, this gives

\begin{equation}\label{eq: levelSub}
\ml_{N, i}\subseteq \ml_{N, j-1}.  
\end{equation}

 Now $N^\vee(-c)$ is also an Artinian level module with socle in degree $c$ by Proposition \ref{prop: boij}.  Since $N^\vee(-c)_i = \hm_\mbk(N_{c-i}, \mbk)$, for $i = 0,\ldots, c-j-1$,
\begin{equation}\label{eq: levelHilbFun}
h_{N^\vee(-c)}(i)\leq h_{N^\vee(-c)}(i+1).  
\end{equation}

Then for $i=0,\ldots ,c-j-1$, using \eqref{eq: levelHilbFun} and Proposition \ref{prop: crux}, we obtain 

$$I(\ml_{ N^\vee(-c), c-j-1})\subseteq I(\ml_{N^\vee(-c), c-j-2})\subseteq \cdots \subseteq I(\ml_{N^\vee(-c), 0}).$$  

That is, for $i = 0,\ldots, c-j-1$, we have 

\begin{equation}\label{eq: levelDualContain}
\ml_{N^\vee(-c),i }\subseteq \ml_{N^\vee(-c), c-j-1}.  
\end{equation}

Now $I(\ml_{N, i})$ is defined as an ideal by the vanishing of the maximal minors of the $\mbk$-linear map $\phi_i:  S_1\ra \hm_\mbk(N_i, N_{i+1})$, and the corresponding map for $ S_1\ra \hm_\mbk(N^\vee(-c)_i, N^\vee(-c)_{i+1})$ is given by $ \phi_{c-i-1}^T$, the transpose of $\phi_{c-i-1}$.  This gives, 

\begin{equation}\label{eq: idealTrans}
I(\ml_{N^\vee(-c), i}) = I(\ml_{N, c-i-1}).  
\end{equation}

Then using \eqref{eq: levelDualContain} and \eqref{eq: idealTrans} , we have for $i=0,\ldots, c-j-1$, 

\begin{equation}\label{eq: levelSup}
\ml_{c-i-1, N}\subseteq \ml_{j, N}.  
\end{equation}

Combining \eqref{eq: levelSub} and \eqref{eq: levelSup},  gives the statement when $N$ has the WLP.   
\end{proof} 

Now Corollary \ref{cor: levelLocus} provides us with a nice decomposition of $\ml_N$ in the case that $N$ is Artinian and level, but pinpointing the $j$ for which this occurs can often be difficult.  However, we can pinpoint the $j$ for which this occurs in the case $N$ is Symmetrically Gorenstein, provided we know that Symmetrically Gorenstein modules are also level.  It is well-known Artinian Gorenstein algebras are level, and we answer this affirmatively for Symmetrically Gorenstein modules below.  

\begin{lemma}\label{lem: symgorlevel}
Suppose $N = S^v/L$, where $L$ is a homogeneous submodule of $S^v$ generated by elements of positive degree with respect to the standard grading on $S^v$.  If $N$ is Symmetrically Gorenstein, then $N$ is level.   
\end{lemma}  

\begin{proof} 
If $\mbg_\bu$ is the minimal free resolution of $N$, we have $\mbg_0 = S^v$.  As $N$ is Symmetrically Gorenstein, by Theorem \ref{thm: MKthm}, the last free module in $\mbg_\bu$ is $(\mbg_0)^{\vee d} = S(-d)^v$, where $d = c+r$ and $c$ is the maximal socle degree of $N$.  By Lemma \ref{lem: socle}, $N$ is level.    
\end{proof}

\begin{remark}\label{rmk: maxvalue}
If the Hilbert function $h_N$ of the Artinian module $ N= N_0\ds\cdots\ds N_c$ is symmetric and unimodal, then $h_N$ achieves its maximum value at $\lfloor\frac{c}{2}\rfloor$. 
\end{remark} 

We can now generalize (\cite{nll}, Corollary  2.7).  

\begin{proposition}\label{prop: symgorlocus}
Suppose $N = N_0 \ds\cdots \ds N_c$ is Symmetrically Gorenstein $S$-module with $N_0\neq 0$ and $N_c\neq 0$.  Then $\ml_N = \ml_{N, j}$, where $j = \lfloor \frac{c-1}{2}\rfloor$.  
\end{proposition}  

\begin{proof}
The Hilbert function of $N$ is symmetric $N$ by Lemma \ref{lem: hilbfunction}.  

Suppose first $N$ does not have the WLP.  Then the symmetry of the Hilbert function and Proposition \ref{prop: graded} say that for any general linear form $\ell$, $\times \ell$ cannot induce a map of maximal rank from $N_j\ra N_{j+1}$.  In this case, we have $I(\ml_{N, j}) = 0$, giving $\ml_N = \ml_{N,j} = (\mbp^{r-1})^*$.

Suppose $N$ has the WLP.  Then the Hilbert function of $N$ is unimodal by Proposition \ref{prop: unimodal}.  As the Hilbert function of $N$ is symmetric, by Remark \ref{rmk: maxvalue} the Hilbert function of $N$ assumes its maximum value at $\floor$.  By Lemma \ref{lem: symgorlevel}, $N$ is level, so that by the proof of Corollary \ref{cor: levelLocus}, we have 

\begin{equation}\label{eq: NLLsymgor}
 \ml_N = \ml_{\floor-1, N} \cup \ml_{\floor, N }. 
\end{equation}

If $c$ is odd, write $ c = 2k+1$, so that $ j = k$, and the symmetry of the Hilbert function gives $h_N(k+1) = h_N(k)$.  Then by Proposition \ref{prop: crux}, $I(\ml_{k, N})\subseteq I(\ml_{k-1, N})$, hence $\ml_N = \ml_{j, N}$.  

If $c$ is even, write $c  = 2k$, so that $j = k-1$.  Since $h_N(k-1) = h_N(k+1)$, and $N$ is Symmetrically Gorenstein, so self-dual by Lemma \ref{lem: hilbfunction}, so the map $\phi_{k-1}: S_1\ra \hm_\mbk(N_{k-1}, N_k)$ is the dual of the corresponding map $\phi_k$, hence $I(\ml_{k-1, N}) = I(\ml_{k, N})$.  This gives $\ml_N = \ml_{N, j}$.  
\end{proof}

\begin{corollary}
Suppose $R = \mbk[x, y, z]$.  Let $\vp$ be a degree zero graded map from $\bds_{j=1}^{n+2} R(-b_j)$ to $R^n$ $(n > 0$), where $\vp = (\vp_{ij})$ and $\vp_{ij}$ is either zero or of positive degree and $b_1 \leq \cdots \leq b_{n+2}$.  Suppose the ideal of maximal minors of $\vp$ has codimension three, so that the cokernel of $\vp$, denoted by $M$, is Artinian.  Then $\ml_M = \ml_{M, \lfloor \frac{d-4}{2}\rfloor}$, where $d = \sum b_j$.  
\end{corollary}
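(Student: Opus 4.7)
The plan is to apply Proposition \ref{symgorlocus} directly to $M$, so the task reduces to verifying its hypotheses for $M$ and then reconciling the indexing. Since the target of $\vp$ is $R^n$, every $a_i$ equals zero; in particular $a_1 = 0$. The field $\mbk$ has characteristic zero (hence not two), so Proposition \ref{symhilb} applies and shows that $M$ is Symmetrically Gorenstein. By Corollary \ref{maxsocdeg}, the maximal socle degree of $M$ is $c = d - a_1 - 3 = d - 3$.

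Next I would confirm the remaining hypotheses of Proposition \ref{symgorlocus}: that $M$ is nonnegatively graded with $M_0 \neq 0$ and $M_c \neq 0$. Nonnegativity is immediate since $M$ is a quotient of $R^n$ by a submodule generated in strictly positive degrees (each nonzero $\vp_{ij}$ sits in $R_{b_j}$ with $b_j > 0$, using Lemma \ref{codim3} together with the standing assumption that nonzero entries of $\vp$ have positive degree). The same observation forces $M_0 \cong \mbk^n \neq 0$, and $M_c \neq 0$ by the very definition of maximal socle degree.

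With all hypotheses verified, Proposition \ref{symgorlocus} yields $\ml_M = \ml_{M, j}$ where
$$j = \Big\lfloor \frac{c-1}{2}\Big\rfloor = \Big\lfloor \frac{d-4}{2}\Big\rfloor,$$
which is exactly the claim. I do not anticipate any real obstacle: the substantive work has already been done in the preceding sections—the Buchsbaum-Rim resolution of Section \ref{sec2}, the Symmetrically Gorenstein criterion of Proposition \ref{symhilb}, and the non-Lefschetz locus analysis of Proposition \ref{symgorlocus}—so the corollary is essentially a bookkeeping application of those results to the special case $a_1 = \cdots = a_n = 0$.
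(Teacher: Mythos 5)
Your proposal is correct and follows the same route as the paper: reduce to the case $a_1=\cdots=a_n=0$, invoke Corollary \ref{maxsocdeg} for the socle degree $d-3$ and Proposition \ref{symhilb} for the Symmetrically Gorenstein property, then apply Proposition \ref{symgorlocus} with $j=\lfloor\frac{(d-3)-1}{2}\rfloor=\lfloor\frac{d-4}{2}\rfloor$. Your extra verification that $M$ is nonnegatively graded with $M_0\cong\mbk^n\neq 0$ (via Lemma \ref{codim3} forcing all $b_j>0$) is a welcome bit of diligence that the paper leaves implicit.
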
  

\begin{proof} 
By Corollary \ref{cor: maxsocdeg}, $M$ has maximal socle degree $d-3$.  By Proposition \ref{prop: symhilb}, $M$ is nonnegatively graded and Symmetrically Gorenstein, hence we may apply Proposition  \ref{prop: symgorlocus} to obtain the result.    
\end{proof}

\section*{Acknowledgements}

The author would like to thank Gioia Failla and Chris Peterson for their helpful comments in the preparation of this manuscript.  




\bibliographystyle{elsarticle-num-names} 
\bibliography{symUniLef.bib}


\end{document}